\theoremstyle{plain}
\newtheorem{definition}{Definition}
\theoremstyle{plain}
\newtheorem{lemma}{Lemma}
\theoremstyle{remark}
\newtheorem{remark}{Remark}
\theoremstyle{plain}
\newtheorem{theorem}{Theorem}
\begin{document}

\

\title{Evaluation of the mass of an asymptotically hyperbolic manifold}

\author{Xiaoxiang Chai}

 \email{chaixiaoxiang@gmail.com}
\begin{abstract}
  We show that the mass of an asymptotically hyperbolic manifold with a
  noncompact boundary can be evaluated via the Ricci tensor and the second
  fundamental form by using purely coordinates. The method is analog to
  Miao-Tam's approach to the asymptotically flat manifold.
\end{abstract}

{\maketitle}

\section{Introduction}

Let $\mathbb{H}^n$ be the hyperbolic $n$-space with constant sectional
curvature -1. Take an arbitrary point $o \in \mathbb{H}^n$ as the origin and
let $r (x) =\ensuremath{\operatorname{dist}}_{\mathbb{H}^n} (o, x)$ be the
geodesic distance from a point $x$ to the origin and $V = \cosh r$. The
rotationally symmetric form of the standard metric $b$ on $\mathbb{H}^n$ is
then written as $d r^2 + \sinh^2 r \sigma$ on $(0, \infty) \times
\mathbb{S}^{n - 1}$ where $\sigma$ is the standard metric on the unit $(n -
1)$-sphere.

As a model for the standard $n$-space $\mathbb{H}^n$ we may also take
$\mathbb{H}^{n - 1} \times \mathbb{R}$. Any point $x \in \mathbb{H}^n$ has the
coordinate $x = (x\prime, s)$. We assume that $o = (o\prime, 0) \in
\mathbb{H}^{n - 1} \times \mathbb{R}$ where we take $o\prime$ as the origin in
$\mathbb{H}^{n - 1}$. Let $U (x\prime) = \cosh
(\ensuremath{\operatorname{dist}}_{\mathbb{H}^{n - 1}} (o\prime, x\prime))$,
now the metric $b$ takes the form
\begin{equation}
  b := \bar{h} + U^2 d s \otimes d s
\end{equation}
where $\bar{h}$ is the metric for $\mathbb{H}^{n - 1}$. Note that $U = V$ when
$s = 0$. Now define
\begin{equation}
  \mathbb{H}^n_+ =\mathbb{H}^{n - 1} \times \{s \in \mathbb{R}: s \geqslant
  0\}
\end{equation}
and $B^n_+ = \{x \in \mathbb{H}^n_+ : \ensuremath{\operatorname{dist}}(x, o)
\leqslant 1\}$. Now let $\bar{\nabla}$ and $\bar{D}$ be respectively the
standard connection on $\mathbb{H}^n$ and $\mathbb{H}^{n - 1} =\mathbb{H}^{n -
1} \times \{0\}$. Denote the Christoffel symbols of $\mathbb{H}^{n - 1}$ by
$(\Gamma)$.

We let the Latin letters $i, j, k, l, \cdots$ range from 1 to $n$ and the
Greek letters $\alpha, \beta, \gamma \cdots$ range from 1 to $n - 1$. The
letter $n$ of course denotes the $s$ factor of $(x\prime, s) \in
\mathbb{H}^n$.

Motivated by the notion of an asymptotically flat manifold with a noncompact
boundary {\cite{almaraz-positive-2016}}, we formulate a similar notion in the
settings of asymptotically hyperbolic manifolds.

A Riemannian manifold $(M^n, g)$ is called {\itshape{asymptotically hyperbolic
with a noncompact boundary}} of decay order $\tau > \frac{n}{2}$ if there
exist a compact set $K$ and a diffeomorphism $\Psi : M\backslash K \to
\mathbb{H}^n_+ \backslash B_+^n$ such that $(\Psi^{- 1})^{\ast} g$ is
uniformly equivalent with $b$ and
\begin{equation}
  \|e\|_b + \| \bar{\nabla} e\|_b + \| \bar{\nabla} \bar{\nabla} e\|_b = O
  (e^{- \tau r}) \label{decay rate}
\end{equation}
where $e : = (\Psi^{- 1})^{\ast} g - b$. For the usual
{\itshape{asymptotically hyperbolic}} manifolds, one could just drop the
positive signs in $\mathbb{H}^n_+ \backslash B_+^n$.

\begin{definition}
  \label{mass} If $(M^n, g)$ is an asymptotically hyperbolic manifold with
  decay order $\tau > \frac{n}{2}$, $V (R + n (n - 1))$ is integrable and $V H
  \in L^1 (\partial M)$. Let $\{D_q \}_{q = 1}^{\infty}$ be a sequence of open
  sets with Lipschitz boundary $\partial D_q$. The boundary $\partial D_q$ are
  made of two portions, one is $\Sigma_q = \partial D_q \cap
  \ensuremath{\operatorname{int}}M$ and the other is $\Pi_q = \partial D_q
  \cap \partial M$. $\Pi_q$ and $\Sigma_q$ share the same boundary $S_q$. Let
  $r_q = \inf_{x \in \Sigma_q} \ensuremath{\operatorname{dist}}_{\mathbb{H}^n}
  (o, x)$.
  \begin{equation}
    \lim_{q \to \infty} r_q = \infty, \text{ } |S_q | \leqslant C \sinh^{n -
    2} r_q \text{ and } | \Sigma_l | \leqslant C \sinh^{n - 1} r_q,
  \end{equation}
  where $C$ is a constant independent of $q$. Then
  \begin{equation}
    m (g) = \lim_{r \to \infty} \left[ \int_{\Sigma_q} (V \bar{\nabla}_l e_{j
    k} - e_{j k} \bar{\nabla}_l V) P^{i j k l} \nu_i + \int_{S_q} e_{\alpha n}
    \theta^{\alpha} \right] \label{eq:mass}
  \end{equation}
  exists and is finite. $m (g)$ is called the mass.
\end{definition}

Let $G$ denote the Einstein tensor $\ensuremath{\operatorname{Rc}}-
\frac{1}{2} R g$, we define the {\itshape{modified Einstein tensor}}
\[ \tilde{G} := G - \frac{1}{2} (n - 1) (n - 2) g. \]
We can evaluate the mass $m (g)$ in terms of the Ricci tensor and the second
fundamental form.

\begin{theorem}
  \label{thm:via ricci} Assume that $(M, g)$ and $\{D_q \}$ are as Definition
  \ref{mass}, then
  \begin{equation}
    m (g) = - \frac{2}{n - 2} \left[ \int_{\Sigma_q} \tilde{G} (X, \nu) +
    \int_{S_q} (A - H h) (X, \theta) \right] + o (1) . \label{eq:via ricci}
  \end{equation}
\end{theorem}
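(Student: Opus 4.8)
The plan is to run Miao--Tam's method and to prove, as a local identity on the ends, that the integrand of \eqref{eq:mass} and that of \eqref{eq:via ricci} differ by a tangential divergence on $\Sigma_q$ plus a term that is $o(1)$ after integration. Since in the asymptotic chart both integrands are, modulo errors that are integrable in the relevant weighted sense, divergences of forms, the value of either bracket over a general $\Sigma_q$ with $\partial\Sigma_q=S_q$ and $r_q=\inf_{\Sigma_q}r$ agrees up to $o(1)$ with its value over the coordinate hemisphere $\{r=r_q\}\cap\mathbb H^n_+$, which meets $\partial M$ orthogonally; so I may take $\Sigma_q=\{r=r_q\}\cap\mathbb H^n_+$ and $S_q=\{r=r_q\}\cap\partial\mathbb H^n_+$. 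As the comparison vector field I would use $X=\bar\nabla V$; on $\mathbb H^n$ it satisfies the static-type identities $\bar\nabla^2V=Vb$ and $\bar\nabla^k\bar\nabla_kV=nV$, and it equals $(\sinh r)\partial_r$, so that $X=(\sinh r_q)\nu_0$ on $\Sigma_q$ with $\nu_0=\partial_r$ the $b$-unit normal, and $X=(\sinh r_q)\theta_0$ on $S_q$ with $\theta_0$ the $\bar h$-unit conormal of $S_q$ in $\partial M$. Since the $g$-unit normal obeys $\nu=\nu_0+O(e^{-\tau r})$ and $\tilde G$ vanishes for $g=b$ (hence $|\tilde G|_b=O(e^{-\tau r})$), one gets $\tilde G(X,\nu)=(\sinh r_q)\,\tilde G(\nu,\nu)+O(e^{(1-2\tau)r})$ pointwise on $\Sigma_q$.

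The crucial device is the twice-traced Gauss equation for $\Sigma_q\subset(M,g)$: writing $R^{\Sigma_q}$, $A^{\Sigma_q}$, $H^{\Sigma_q}$ for the intrinsic scalar curvature, the second fundamental form and the mean curvature of $\Sigma_q$,
\[ \tilde G(\nu,\nu)=\tfrac12\big(-R^{\Sigma_q}+(H^{\Sigma_q})^2-|A^{\Sigma_q}|^2\big)-\tfrac12(n-1)(n-2). \]
This right-hand side involves no second normal derivative of $g$; it vanishes for $g=b$, and its linearization in $e$ splits into (a) tangential divergences, coming from $DR^{\Sigma_q}(e)=\operatorname{div}^{\Sigma_q}\operatorname{div}^{\Sigma_q}e-\Delta^{\Sigma_q}(\operatorname{tr}e)-\langle\operatorname{Rc}^{\Sigma_q}_b,e\rangle$ and from the cross terms of $(H^{\Sigma_q})^2-|A^{\Sigma_q}|^2$, which stay divergences on $\Sigma_q$ after one multiplies by the constant $\sinh r_q$; (b) terms carrying one fewer derivative of $e$, such as $\langle\operatorname{Rc}^{\Sigma_q}_b,e\rangle$ and the analogous extrinsic pieces; and (c) genuinely quadratic terms, $O(e^{-2\tau r})$. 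An analogous treatment of the directions along $\partial M$, using the Codazzi equation of $\partial M$ (connection $\bar D$) after modifying $X$ near $\partial M$ so that $g(X,N)=0$, rewrites $\tilde G(X,N)$ in terms of $A-Hh$ and $H$ and is what eventually accounts for the $S_q$-density $(A-Hh)(X,\theta)$.

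Carrying out the parallel, and more elementary, Taylor expansion of $\mathbb U(V,e)(\nu)$ about $b$ on $\Sigma_q$, the heart of the proof is that the ``one fewer derivative'' terms of $-\tfrac{2}{n-2}\tilde G(X,\nu)$ cancel identically against those of $\mathbb U(V,e)(\nu)$ -- this is where the coefficient $-\tfrac{2}{n-2}$ and the precise index structure of $P^{ijkl}$ are forced, and where the identities $\bar\nabla^2V=Vb$ and $\bar\nabla^k\bar\nabla_kV=nV$ enter -- leaving
\[ \mathbb U(V,e)(\nu)+\tfrac{2}{n-2}\tilde G(X,\nu)=\operatorname{div}^{\Sigma_q}(\xi)+O(e^{(1-2\tau)r}) \]
for a $1$-form $\xi$ on $\Sigma_q$. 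Integrating over $\Sigma_q$ and applying the divergence theorem (with $\partial\Sigma_q=S_q$), the divergence term produces $\int_{S_q}\xi(\theta_0)$, which -- using the orthogonality of $\Sigma_q$ and $\partial M$ and the corresponding matching on $S_q$ of the $\partial M$-side -- cancels $\int_{S_q}e_{\alpha n}\theta^\alpha+\tfrac{2}{n-2}\int_{S_q}(A-Hh)(X,\theta)$, while the remainder integrates to $O(e^{(n-2\tau)r_q})=o(1)$ because $|\Sigma_q|\le C\sinh^{n-1}r_q$ and $\tau>\tfrac n2$. Together with Definition \ref{mass} this gives \eqref{eq:via ricci}.

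The main obstacle is the cancellation of the ``middle'' terms, i.e. the (b)-terms above, between $\mathbb U(V,e)(\nu)$ and $-\tfrac{2}{n-2}\tilde G(X,\nu)$ on $\Sigma_q$ and, analogously, between $e_{\alpha n}\theta^\alpha$ and $-\tfrac{2}{n-2}(A-Hh)(X,\theta)$ on $S_q$: these terms are \emph{not} negligible after integration -- for $\tfrac n2<\tau<n-1$ they would diverge -- so the identity hinges on checking that they agree termwise, which is the computation that determines the constant $-\tfrac{2}{n-2}$. The remaining difficulty is analytic and borderline: it is exactly $\tau>\tfrac n2$, together with the volume bounds $|\Sigma_q|\le C\sinh^{n-1}r_q$ and $|S_q|\le C\sinh^{n-2}r_q$, that makes every quadratic remainder an $o(1)$ term. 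The rest -- the reduction to coordinate hemispheres, the comparison of $g$- and $b$-unit normals and area elements, and the tangential adjustment of $X$ near $\partial M$ -- is routine bookkeeping.
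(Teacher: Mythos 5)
Your strategy is workable, but it is not the paper's: despite the label ``Miao--Tam's method,'' what you describe --- eliminating second normal derivatives of $g$ from $\tilde G(\nu,\nu)$ via the twice-traced Gauss equation of $\Sigma_q$, recognizing the linearized intrinsic scalar curvature as a tangential divergence, and closing up on $S_q$ with the Codazzi equation of $\partial M$ --- is Herzlich's device, i.e.\ essentially the route of de Lima--Gir\~ao--Montalb\'an that the paper explicitly contrasts itself with. The paper instead stays entirely in coordinates: it expands $\tilde G_{ik}$ at infinity (its second lemma), contracts with $\bar\eta^k\bar\nabla^i V$, and integrates by parts over the \emph{solid} region $D_q$ rather than over $\Sigma_q$, using $\bar\nabla_i\bar\nabla_j V=Vb_{ij}$, $\bar\Delta V=nV$ and commutation against the constant-curvature tensor $\bar R$ to convert everything into the flux $\int_{\Sigma_q}\mathcal C^i\nu_i$ plus integrals over $\Pi_q=\partial D_q\cap\partial M$; the corner terms $\int_{S_q}e_{\alpha n}\theta^\alpha$ and $\int_{S_q}(A-Hh)(X,\theta)$ are likewise pushed onto $\Pi_q$ by the divergence theorem on $\partial M$, and the argument closes with the identity $K+L=(n-2)\int_{\Pi_q}\bar h^{\alpha\beta}\bar D_\alpha e_{\beta n}$. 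Your route yields a manifestly local surface identity on $\Sigma_q$ and avoids working with general exhaustions until the very end; the paper's route never needs the reduction to coordinate hemispheres nor the Gauss/Codazzi equations.

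That said, your proposal is an outline rather than a proof. The decisive step --- the termwise cancellation of the ``one fewer derivative'' terms, which you correctly identify as the computation that fixes the constant $-\tfrac{2}{n-2}$ and which cannot be absorbed into the error since those terms diverge individually for $\tfrac n2<\tau<n-1$ --- is asserted, not carried out. The corner bookkeeping is also the thinnest part of the sketch: the reduction to hemispheres has to be performed for the pair (integrand on $\Sigma_q$, integrand on $S_q$) simultaneously, since neither is separately a divergence (for $\tilde G$ one needs the contracted Bianchi identity together with the integrability of $V(R+n(n-1))$; for $A-Hh$ the contracted Codazzi equation of $\partial M$, whose source term $\ensuremath{\operatorname{Rc}}(\eta,\cdot)$ must be matched against the bulk piece); and the proposed ``modification of $X$ near $\partial M$'' needs justification, since $\bar\nabla V$ is $b$-tangent but only approximately $g$-tangent to $\partial M$. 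None of this is an error of principle, but each is a point where the outline must be filled in before it establishes \eqref{eq:via ricci}.
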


\begin{remark}
  While preparing the article, the author learnt the formula {\eqref{eq:mass}}
  and {\eqref{eq:via ricci}} is also independently found by
  {\cite{almaraz-mass-2018}}, {\cite{de-lima-mass-2018}}. The geometric
  invariance of {\eqref{eq:mass}} is due to Almaraz-de Lima
  {\cite{almaraz-mass-2018}}.
\end{remark}

It is well know in the community of general relativity that the ADM mass of an
asymptotically flat manifold can be evaluating via the Ricci tensor. We refer
to the work of Miao and Tam {\cite{miao-evaluation-2016}} and the references
therein for the history. Miao and Tam used purely coordinates, and there is
also a work of Herzlich {\cite{herzlich-computing-2016}} who used instead a
coordinate-free approach.

The author {\cite{chai-aspects-2018}} in his PhD thesis had proved a similar
formula for the ADM type mass defined in {\cite{almaraz-positive-2016}} using
both approaches from Miao-Tam and Herzlich, see also {\cite{chai-two-2018}}
and {\cite{de-lima-mass-2018}}. In {\cite{de-lima-mass-2018}}, the formula
{\eqref{eq:via ricci}} is proved based on a method of Herzlich while here in
this article is based on purely coordinates.

This type of formula is used to prove the convergence of the Hawking mass and
Brown-York mass to the ADM mass in the asymptotically flat manifold case. See
the work of Miao, Tam and Xie {\cite{miao-quasi-local-2017}}. Given a hypersurface
$\Sigma^{n - 1}$ with boundary intersecting the ambient boundary orthogonally,
with the formula {\eqref{eq:via ricci}} and with similar calculations as in
{\cite{miao-quasi-local-2017}}, one can define a {\itshape{Hawking type mass
with boundary}},
\begin{equation}
  m_H (\Sigma) : = | \Sigma |^{\frac{1}{n - 1}} \left\{ \int_{\Sigma}
  [R_{\Sigma} + (n - 1) (n - 2) - \frac{n - 2}{n - 1} H_{\Sigma}^2] + 2
  \int_{\partial \Sigma} H_{\partial \Sigma} \right\} .
\end{equation}
In particular, when $\Sigma$ is of dimension 2, by the Gauss-Bonnet theorem,
one has (up to a constant)
\begin{equation}
  | \Sigma |^{1 / 2} \{8 \pi \chi (\Sigma) - \int_{\Sigma} (H^2 - 4)\}
  \label{hawking 2}
\end{equation}
resembling the classical Hawking mass {\cite{hawking-gravitational-1968}}.

\

{\bfseries{Acknowledgement.}} The author would like to extend gratitude to
Prof. Martin Man-chun Li and Prof. Luen Fai Tam for helpful discussions. The
author would like to thank Levi Lima for communicating the preprints
{\cite{almaraz-mass-2018}}, {\cite{de-lima-mass-2018}}.

\section{Proofs}

Let $\nabla$ be the Levi-Civita connection of the metric $g$, $h$ the metric
induced on $\partial M$, $D$ be the connection induced on $\partial M$ and
$\eta$ the outward normal to $\partial M$. We use the Einstein summation
convention. Also, we add a bar to denote the corresponding quantities of the
background metric $b$.

Let $P^{i j k l} = \frac{1}{2} (g^{i k} g^{j l} - g^{i l} g^{j k})$, assuming
the decay rate {\eqref{decay rate}}, we have an expansion at infinity,
\begin{equation}
  V (R + n (n - 1)) = 2 \bar{\nabla}_i ((V \bar{\nabla}_l e_{j k} - e_{j k}
  \bar{\nabla}_l V) P^{i j k l}) + O (e^{- 2 \tau r + r}) . \label{decay
  scalar}
\end{equation}
This expansion is used to define the {\itshape{mass functional}} $H_{\Phi}
(V)$ by Chrusciel and Herzlich {\cite{chrusciel-mass-2003}}. A mass is
introduced for an asymptotically flat manifold with a noncompact boundary by
Almaraz, Barbosa and de Lima {\cite{almaraz-positive-2016}}, where an
expansion of the mean curvature of the noncompact boundary is also used to
define an ADM mass. Similarly, in the settings of an asymptotically hyperbolic
manifold with a noncompact boundary,

\begin{lemma}
  Let $\mathcal{C}^i = (V \bar{\nabla}_l e_{j k} - e_{j k} \bar{\nabla}_l V)
  P^{i j k l}$, then
  \begin{equation}
    2 V H = - 2 \langle \mathcal{C}, \eta \rangle + \bar{h}^{\alpha \beta}
    \bar{D}_{\alpha} e_{\beta n} + O (e^{- 2 \tau r + r}) \label{decay mean}
  \end{equation}
  along $\partial M$. Here, $\bar{D}_{\alpha} e_{\beta n}$ is understood as
  the covariant derivative of the tensor field $e (\partial_n, \cdot)
  |_{\partial M}$.
\end{lemma}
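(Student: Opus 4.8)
\emph{Plan of proof.} The idea, in the purely--coordinate spirit of \eqref{decay scalar}, is to compute the mean curvature $H$ of $\partial M$ directly from the divergence of its unit normal, and then to recognize the result inside the normal component of $\mathcal{C}$. Two background facts will be used. In the coordinates $(x',s)$ the boundary $\partial M$ is the slice $\{s=0\}$ of $(\mathbb{H}^n_+,b)$, and since $b=\bar h+U^2\,ds\otimes ds$ with $\bar h=\bar h(x')$ and $U=U(x')$ independent of $s$, one computes $\bar\Gamma^n_{\alpha\beta}=0$, $\bar\Gamma^\gamma_{\alpha n}=0$, $\bar\Gamma^n_{\alpha n}=\partial_\alpha\log U$, $\bar\Gamma^\gamma_{\alpha\beta}=\Gamma^\gamma_{\alpha\beta}$ and $\sqrt{\det b}=U\sqrt{\det\bar h}$; in particular the slices $\{s=\mathrm{const}\}$ are totally geodesic, so the background mean curvature of $\partial M$ vanishes. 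Moreover $(x',s)\mapsto(x',-s)$ is an isometry of $b$ fixing $o$, hence $V=\cosh\operatorname{dist}_{\mathbb{H}^n}(o,\cdot)$ is even in $s$, so $\partial_s V=0$ along $\{s=0\}$ while $\bar\nabla_\alpha V=\partial_\alpha V=\partial_\alpha U=\bar D_\alpha V$ there.

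The outward unit normal of $\partial M$ for $g$ extends to the $g$--unit normal field of the slices $\{s=\mathrm{const}\}$, namely $\eta=-g^{in}\partial_i/\sqrt{g^{nn}}$, and $H=\operatorname{div}_g\eta=-(\det g)^{-1/2}\partial_i\big((\det g)^{1/2}g^{in}/\sqrt{g^{nn}}\big)$ along $\partial M$. Writing $g=b+e$ and expanding in $e$, the zeroth order term is the (vanishing) background mean curvature, and because $g^{-1}$ and its coordinate derivatives are bounded in these coordinates the quadratic remainder is controlled just as in \eqref{decay scalar}, contributing $O(e^{-2\tau r+r})$ after multiplication by $V=\cosh r$. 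Using $\bar\Gamma^n_{\alpha\beta}=0$, $\bar\Gamma^n_{\alpha n}=\partial_\alpha\log U$, $\partial_s U=0=\partial_s V|_{s=0}$ and $\sqrt{\det b}=U\sqrt{\det\bar h}$, the linear term simplifies --- several $e_{nn}$ terms cancelling --- to
\[
2VH=-\bar h^{\alpha\beta}\partial_n e_{\alpha\beta}+2\,\bar h^{\alpha\beta}\bar D_\alpha e_{\beta n}+O(e^{-2\tau r+r})
\]
along $\partial M$, where $\bar h^{\alpha\beta}\bar D_\alpha e_{\beta n}$ is the tangential divergence of the $1$--form $e(\partial_n,\cdot)|_{\partial M}$ and we used $\bar\nabla_\alpha e_{\beta n}=\bar D_\alpha e_{\beta n}-(\partial_\alpha\log U)e_{\beta n}$ to pass from ambient to intrinsic derivatives.

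On the other side, $\mathcal{C}$ is already of size $O(e^{-\tau r+r})$, so replacing $g^{-1}$ by $b^{-1}$ inside it costs only $O(e^{-2\tau r+r})$ and gives $\mathcal{C}^i=\tfrac12\big[V\big((\operatorname{div}_b e)^i-\bar\nabla^i\operatorname{tr}_b e\big)-\big(e^{ij}\bar\nabla_j V-(\operatorname{tr}_b e)\bar\nabla^i V\big)\big]+O(e^{-2\tau r+r})$. Taking $i=n$, using $\bar\nabla_n V=0$ and rewriting $e^{\alpha n}=U^{-2}\bar h^{\alpha\beta}e_{\beta n}$, $e^{nn}=U^{-4}e_{nn}$, the terms in $\partial_n e_{nn}$ and in $(\partial_\alpha\log U)\bar h^{\alpha\beta}e_{\beta n}$ all cancel, leaving $\mathcal{C}^n=\tfrac12 U^{-1}\big(\bar h^{\alpha\beta}\bar D_\alpha e_{\beta n}-\bar h^{\alpha\beta}\partial_n e_{\alpha\beta}\big)+O(e^{-2\tau r+r})$ along $\partial M$. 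Since $\eta=-U^{-1}\partial_s+O(e^{-\tau r})$ and $U=V$ on $\partial M$, one has $\langle\mathcal{C},\eta\rangle=-V\mathcal{C}^n+O(e^{-2\tau r+r})=-\tfrac12\big(\bar h^{\alpha\beta}\bar D_\alpha e_{\beta n}-\bar h^{\alpha\beta}\partial_n e_{\alpha\beta}\big)+O(e^{-2\tau r+r})$. Comparing with the displayed expansion of $2VH$ yields \eqref{decay mean}.

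The main obstacle is bookkeeping rather than concept: one must keep careful track of the factors $V$ and $\bar\nabla V$, which are of size $\sim e^{r}$, verify that every term discarded in passing from $g$ to $b$ (in $\mathcal{C}$ and in the normalization of $\eta$) is genuinely $O(e^{-2\tau r+r})$ and not of the larger size $O(e^{-2\tau r+2r})$, and fix consistently the sign conventions for $H$, the second fundamental form and the outward normal $\eta$, so that, after integration over $\partial M$, the tangential divergence term becomes (via the divergence theorem on $(\partial M,\bar h)$, with $\theta$ the outward conormal of $S_q$ in $\partial M$) the boundary term $\int_{S_q}e_{\alpha n}\theta^{\alpha}$ of \eqref{eq:mass}.
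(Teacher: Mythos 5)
Your proposal is correct and follows essentially the same route as the paper: expand $2VH$ and $2\langle\mathcal{C},\eta\rangle$ separately in the adapted coordinates $(x',s)$ using the background Christoffel symbols of $b=\bar h+U^2\,ds\otimes ds$, the vanishing of $\partial_s V$ on $\{s=0\}$, and the relation $\bar\nabla_\alpha e_{\beta n}=\bar D_\alpha e_{\beta n}-(\partial_\alpha\log U)e_{\beta n}$, then add; your intermediate expressions $2VH=2\bar h^{\alpha\beta}\bar D_\alpha e_{\beta n}-\bar h^{\alpha\beta}\partial_n e_{\alpha\beta}+O(e^{-2\tau r+r})$ and $2\langle\mathcal{C},\eta\rangle=-\bar h^{\alpha\beta}\bar D_\alpha e_{\beta n}+\bar h^{\alpha\beta}\partial_n e_{\alpha\beta}+O(e^{-2\tau r+r})$ agree with the paper's after converting its ambient derivatives to intrinsic ones. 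The only (cosmetic) difference is that you linearize $H$ via the coordinate divergence of the unit normal rather than via $H=h^{\alpha\beta}(g^{nn})^{-1/2}\Gamma^n_{\alpha\beta}$ as the paper does, and the asserted cancellations of the $e_{nn}$ terms do check out.
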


\begin{proof}
  Since we are using this decomposition $b = \bar{h} + U^2 d s \otimes d s$ \
  of background metric $b$ on $M$ and the length of $\partial_s$ or
  $\partial_n$ is of the order $O (e^r)$, the decay of metric $g$ written in
  components then has to be handled carefully. We choose the coordinate $x$ on
  $M$ such that the vector fields $\partial_{\alpha}$ on the $\mathbb{H}^{n -
  1}$ factor have uniformly bounded length (from both below and above) and the
  $n$-th coordinate to be just $s$. Then we have the decay for the metric $g$,
  \begin{equation}
    g_{\alpha \beta} = b_{\alpha \beta} + O (e^{- \tau r}), g_{\alpha n} = O
    (e^{- \tau r + r}), g_{n n} = V^2 (1 + O (e^{- \tau r})) .
  \end{equation}
  The we use the adjugate matrix from linear algebra to find the decay for the
  inverse metric $g^{- 1}$,
  \begin{equation}
    g^{\alpha \beta} = b^{\alpha \beta} + O (e^{- \tau r}), g^{\alpha n} = O
    (e^{- \tau r - r}), g^{n n} = V^{- 2} (1 + O (e^{- \tau r})) .
  \end{equation}
  We readily have (see also {\cite{almaraz-positive-2016}})
  \begin{equation}
    \eta = - (g^{n n})^{- 1 / 2} g^{n i} \partial_i, \eta_n = - (g^{n n})^{- 1
    / 2}, \eta_{\alpha} = 0 \label{normals}
  \end{equation}
  and the mean curvature
  \begin{equation}
    H = - h^{\alpha \beta} \langle \eta, \nabla_{\alpha} \partial_{\beta}
    \rangle = h^{\alpha \beta} (g^{n n})^{- 1 / 2} \Gamma_{\alpha \beta}^n .
  \end{equation}
  Here $h^{\alpha \beta}$, the induced inverse metric on $\partial M$, is also
  the inverse of $g_{\alpha \beta}$ viewed as an $(n - 1) \times (n - 1)$
  matrix. Indeed, along $\partial M$, one can check easily from
  {\eqref{normals}} that $h^{\alpha \beta} g_{\beta \gamma} =
  \delta_{\gamma}^{\alpha}$, and the decay of $h^{\alpha \beta}$ is
  
  \begin{align}
    h^{\alpha \beta} & = g^{\alpha \beta} - \eta^{\alpha} \eta^{\beta} =
    b^{\alpha \beta} + O (e^{- \tau r}) = \bar{h}^{\alpha \beta} + O (e^{-
    \tau r}) .
  \end{align}
  
  The following line from {\cite[Lemma 3.1]{dahl-penrose-2013}} is an easy
  calculation and often used.
  \begin{equation}
    \bar{\Gamma}_{\alpha \beta}^{\gamma} = (\bar{\Gamma})_{\alpha
    \beta}^{\gamma}, \quad \bar{\Gamma}_{\alpha \beta}^n =
    \bar{\Gamma}^{\alpha}_{\beta n} = 0, \quad \bar{\Gamma}_{\alpha n}^n =
    V^{- 1} \bar{D}_{\alpha} V = V^{- 1} \bar{\nabla}_{\alpha} V.
  \end{equation}
  As is well known that the difference of two Christoffel symbols is a tensor,
  then
  \begin{align}
    \Gamma_{\alpha \beta}^n = & \Gamma_{\alpha \beta}^n - \bar{\Gamma}_{\alpha
    \beta}^n\\
    = & \frac{1}{2} g^{n l} (\bar{\nabla}_{\alpha} g_{\beta l} +
    \bar{\nabla}_{\beta} g_{\alpha l} - \bar{\nabla}_l g_{\alpha \beta})\\
    = & \frac{1}{2} g^{n l} (\bar{\nabla}_{\alpha} e_{\beta l} +
    \bar{\nabla}_{\beta} e_{\alpha l} - \bar{\nabla}_l e_{\alpha \beta})\\
    = & \frac{1}{2} g^{n \gamma} (\bar{\nabla}_{\alpha} e_{\beta \gamma} +
    \bar{\nabla}_{\beta} e_{\alpha \gamma} - \bar{\nabla}_{\gamma} e_{\alpha
    \beta}) + \frac{1}{2} g^{n n} (\bar{\nabla}_{\alpha} e_{\beta n} +
    \bar{\nabla}_{\beta} e_{a n} - \bar{\nabla}_n e_{\alpha \beta})\\
    = & O (e^{- 2 \tau r - r}) + \frac{1}{2} V^{- 2} (1 + O (e^{- \tau r}))
    (\bar{\nabla}_{\alpha} e_{\beta n} + \bar{\nabla}_{\beta} e_{a n} -
    \bar{\nabla}_n e_{\alpha \beta})\\
    = & \frac{1}{2} V^{- 2} (\bar{\nabla}_{\alpha} e_{\beta n} +
    \bar{\nabla}_{\beta} e_{\alpha n} - \bar{\nabla}_n e_{\alpha \beta}) + O
    (e^{- 2 \tau r - r})\\
    = & O (e^{- \tau r - r}) .
  \end{align}
Since $(g^{n n})^{- 1 / 2} = V (1 + O (e^{- \tau r}))$,
  \begin{equation}
    2 A_{\alpha \beta} = V^{- 1} (\bar{\nabla}_{\alpha} e_{\beta n} +
    \bar{\nabla}_{\beta} e_{\alpha n} - \bar{\nabla}_n e_{\alpha \beta}) + O
    (e^{- \tau r}) . \label{decay second fundamental form}
  \end{equation}
  Expanding $2 V H$ at infinity,
  \begin{align}
    2 V H & = 2 V h^{\alpha \beta} (g^{n n})^{- 1 / 2} \Gamma^n_{\alpha
    \beta}\\
    & = 2 V (b^{\alpha \beta} + O (e^{- \tau r})) V (1 + O (e^{- \tau r}))
    \Gamma_{\alpha \beta}^n\\
    & = b^{\alpha \beta} (2 \bar{\nabla}_{\alpha} e_{\beta n} -
    \bar{\nabla}_n e_{\alpha \beta}) + O (e^{- 2 \tau r + r}) . \label{decay
    mean curvature}
  \end{align}
  Expanding $2 \langle \mathcal{C}, \eta \rangle$ at infinity along $\partial
  M$,
  \begin{align}
    2 \langle C, \eta \rangle & = \eta_i (V \bar{\nabla}_l e_{j k} - e_{j k}
    \bar{\nabla}_l V) (g^{i k} g^{j l} - g^{i l} g^{j k})\\
    & = \bar{\eta}_i (V \bar{\nabla}_l e_{j k} - e_{j k} \bar{\nabla}_l V)
    (b^{i k} b^{j l} - b^{i l} b^{j k}) + O (e^{- 2 \tau r + r})\\
    & = - V (V \bar{\nabla}_l e_{j k} - e_{j k} \bar{\nabla}_l V) (b^{n k}
    b^{j l} - b^{n l} b^{j k}) + O (e^{- 2 \tau r + r})\\
    & = - (\bar{\nabla}_l e_{j n} - e_{j n} V^{- 1} \bar{\nabla}_l V) b^{j l}
    \\
    & \quad + (\bar{\nabla}_n e_{j k} - e_{j k} V^{- 1} \bar{\nabla}_n V)
    b^{j k} + O (e^{- 2 \tau r + r})\\
    & = - b^{j l} \bar{\nabla}_l e_{j n} + e_{\alpha n} b^{\alpha \beta} V^{-
    1} \bar{\nabla}_{\beta} V + b^{i j} \bar{\nabla}_n e_{i j} + O (e^{- 2
    \tau r + r}),
  \end{align}
  where in the second line we can obtain the expansion by returning
  temporarily to a coordinate whose coordinate vector fields have uniformly
  bounded length (from both below and above), and $\bar{\eta}$ is the outward
  normal to $\partial M$ under the metric $b$, and in the last line we have
  also used that
  \[ \bar{\nabla}_n V = \langle \partial_n, \bar{\nabla} V \rangle = \langle
     \partial_n, \bar{\nabla} r \rangle V\prime = 0. \]
  Finally, noting that $b^{\alpha \beta} = \bar{h}^{\alpha \beta}$ and
  $b_{\alpha \beta} = \bar{h}_{\alpha \beta}$, we have
  \begin{align}
    2 V H + 2 \langle C, \eta \rangle & = b^{\alpha \beta} (2
    \bar{\nabla}_{\alpha} e_{\beta n} - \bar{\nabla}_n e_{\alpha \beta}) + O
    (e^{- 2 \tau r + r})\\
    & \quad - b^{j l} \bar{\nabla}_l e_{j n} + e_{\alpha n} b^{\alpha \beta}
    V^{- 1} \bar{\nabla}_{\beta} V + b^{i j} \bar{\nabla}_n e_{i j}\\
    & = \bar{h}^{\alpha \beta} (\bar{\nabla}_{\alpha} e_{\beta n} + e_{\beta
    n} V^{- 1} \bar{\nabla}_{\alpha} V) + O (e^{- 2 \tau r + r}) .
  \end{align}
  Since we view $e (\partial_n, \cdot) |_{\partial M}$ as a tensor field on
  $\partial M$, inserting the following relation to the above
  \begin{align}
    \bar{\nabla}_{\alpha} e_{\beta n} & = \partial_{\alpha} e_{\beta n} -
    \bar{\Gamma}^i_{\alpha \beta} e_{i n} - \bar{\Gamma}^i_{\alpha n} e_{i
    \beta}\\
    & = (\partial_{\alpha} e_{\beta n} - \bar{\Gamma}^{\gamma}_{\alpha \beta}
    e_{\gamma n}) - \bar{\Gamma}^n_{\alpha \beta} e_{n n} -
    \bar{\Gamma}^{\gamma}_{\alpha n} e_{\gamma \beta} - \bar{\Gamma}^n_{\alpha
    n} e_{n \beta}\\
    & = \bar{D}_{\alpha} e_{\beta n} - \bar{\Gamma}^n_{\alpha n} e_{n
    \beta}\\
    & = \bar{D}_{\alpha} e_{\beta n} - e_{\beta n} V^{- 1} \bar{D}_{\alpha}
    V\\
    & = \bar{D}_{\alpha} e_{\beta n} - e_{\beta n} V^{- 1}
    \bar{\nabla}_{\alpha} V, \label{eq:boundary derivative}
  \end{align}
  finishes the proof.
\end{proof}
We establish the following decay of $\tilde{G}$. The proof is also used later
in the paper.
\begin{lemma}
  For an asymptotically hyperbolic manifold $(M, g)$ with decay rate $\tau > n
  / 2$, the modified Einstein tensor expands at infinity as
  \begin{align}
    - \tilde{G}_{i k} & = 2 (1 - n) e_{i k} + (\bar{\nabla}_i \bar{\nabla}_k E
    - \bar{\nabla}^l \bar{\nabla}_i e_{k l} - \bar{\nabla}^l \bar{\nabla}_k
    e_{i l} + \bar{\nabla}^l \bar{\nabla}_l e_{i k})\\
    & \quad - (1 - n) b_{i k} E - b_{i k} (\bar{\Delta} E - \bar{\nabla}^j
    \bar{\nabla}^l e_{j l}) + O (e^{- 2 \tau r}) .
  \end{align}
\end{lemma}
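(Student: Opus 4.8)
The plan is to obtain \eqref{eq:via ricci} — rather, the displayed expansion of $\tilde{G}$ — by linearizing the modified Einstein tensor about the hyperbolic background $b$ purely in coordinates, exactly as one linearizes the Einstein tensor of an asymptotically flat manifold. Write $g=b+e$, put $E:=b^{ij}e_{ij}$ and $e^{ij}:=b^{ik}b^{jl}e_{kl}$. The inputs are: the inverse-metric expansion $g^{ij}=b^{ij}-e^{ij}+(\text{quadratic in }e)$; the difference tensor $S^{j}_{ik}:=\Gamma^{j}_{ik}-\bar{\Gamma}^{j}_{ik}=\tfrac12 g^{jl}(\bar{\nabla}_i e_{kl}+\bar{\nabla}_k e_{il}-\bar{\nabla}_l e_{ik})$, with linear part $\gamma^{j}_{ik}:=\tfrac12 b^{jl}(\bar{\nabla}_i e_{kl}+\bar{\nabla}_k e_{il}-\bar{\nabla}_l e_{ik})$; and the standard identity
\[
  \operatorname{Rc}_{ik}(g)=\bar{R}_{ik}+\bar{\nabla}_l S^{l}_{ik}-\bar{\nabla}_i S^{l}_{lk}+S^{l}_{lp}S^{p}_{ik}-S^{l}_{ip}S^{p}_{lk}.
\]
Since $S^{l}_{lk}=\tfrac12 g^{lm}\bar{\nabla}_k e_{lm}$ has leading part $\tfrac12\bar{\nabla}_k E$, the linear part of $\operatorname{Rc}_{ik}(g)-\bar{R}_{ik}$ is $\tfrac12(\bar{\nabla}^l\bar{\nabla}_i e_{kl}+\bar{\nabla}^l\bar{\nabla}_k e_{il}-\bar{\nabla}^l\bar{\nabla}_l e_{ik}-\bar{\nabla}_i\bar{\nabla}_k E)$.

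First I would feed in the constant-curvature data of $b$, namely $\bar{R}_{ijkl}=-(b_{ik}b_{jl}-b_{il}b_{jk})$, $\bar{R}_{ik}=-(n-1)b_{ik}$ and $\bar{R}=-n(n-1)$. Contracting the Ricci identity with $g^{ik}$ and using $-e^{ik}\bar{R}_{ik}=(n-1)E$ yields the scalar-curvature expansion $R(g)=-n(n-1)+(n-1)E+(\bar{\nabla}^j\bar{\nabla}^l e_{jl}-\bar{\Delta} E)+O(e^{-2\tau r})$, consistent with \eqref{decay scalar}. Then I would assemble $\tilde{G}_{ik}=\operatorname{Rc}_{ik}(g)-\tfrac12 R(g)g_{ik}-\tfrac12(n-1)(n-2)g_{ik}$: the coefficient of $b_{ik}$ is $-(n-1)+\tfrac{n(n-1)}{2}-\tfrac{(n-1)(n-2)}{2}=0$, which is precisely why the modification $-\tfrac12(n-1)(n-2)g$ is chosen — it is the analogue of the vanishing of the Einstein tensor of the flat background. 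What survives is linear in $e$ plus the quadratic remainder; collecting the linear contributions (those from $\gamma$, those from the $-\tfrac12 R g$ term, and the algebraic ones produced by contracting against $\bar{R}_{ik}=-(n-1)b_{ik}$) gives the stated expansion, including the $e_{ik}$ and $b_{ik}E$ terms.

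The step that needs genuine care, and the one I expect to be the main obstacle, is the decay of the quadratic remainder $S^{l}_{lp}S^{p}_{ik}-S^{l}_{ip}S^{p}_{lk}$ together with the $O(e^2)$ corrections buried in $g^{ij}$. In the coordinates chosen in the proof of \eqref{decay mean} the components of $e$ are anisotropic: compatibly with $\|e\|_b=O(e^{-\tau r})$ one has $e_{\alpha\beta}=O(e^{-\tau r})$, $e_{\alpha n}=O(e^{-\tau r+r})$, $e_{nn}=O(e^{-\tau r+2r})$, with the same pattern for $\bar{\nabla} e$ and $\bar{\nabla}\bar{\nabla} e$, while $b^{nn}=V^{-2}=O(e^{-2r})$ and $b^{\alpha n}=0$. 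The key observation is that in $S^{j}_{ik}$ every raised $n$-index is accompanied by a factor $g^{nn}=V^{-2}(1+O(e^{-\tau r}))$, so that $S^{j}_{ik}=O(e^{-\tau r})$ componentwise in this frame; hence each quadratic term is $O(e^{-2\tau r})$, with no spurious exponential growth — in contrast with the extra $e^{r}$ appearing in \eqref{decay mean}, which came from normalizing the length-$O(e^{r})$ field $\partial_n$ and does not occur here. Tabulating these component sizes makes the estimate routine. Since $\tau>n/2$, the remainder $O(e^{-2\tau r})$ is integrable against $V\sim e^{r}$, which is what Definition \ref{mass} and Theorem \ref{thm:via ricci} require; moreover the linear part above is, modulo the algebraic $e_{ik}$ and $b_{ik}E$ terms, a double divergence of $e$, and it is this divergence structure that later lets one integrate by parts and recover the boundary integrand of \eqref{eq:mass}.
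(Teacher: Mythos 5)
Your proposal is correct and follows essentially the same route as the paper: linearize via the difference tensor $\Gamma-\bar{\Gamma}$, use the standard expansion of the Ricci tensor, the constant-curvature identities of $b$, and the adjugate/Taylor expansion of $g^{-1}$ to get the scalar curvature, then assemble $\tilde{G}$ with the quadratic terms absorbed into $O(e^{-2\tau r})$. Your coefficient bookkeeping reproduces exactly the paper's displayed expansion, which (as the paper's own proof shows) is really an expansion of $-2\tilde{G}_{ik}$ rather than $-\tilde{G}_{ik}$ — the missing factor $2$ in the lemma statement is a typo in the paper, not a flaw in your argument.
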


\begin{proof}
  Let $\Lambda = \Gamma - \bar{\Gamma}$, $\Lambda$ is the difference of the
  Christoffel symbols and hence a tensor. More specifically,
  \begin{align}
    & \frac{1}{2} g^{k l} (\bar{\nabla}_i e_{j l} + \bar{\nabla}_j e_{i l} -
    \bar{\nabla}_l e_{i j})\\
    = & \frac{1}{2} g^{k l} (\bar{\nabla}_i g_{j l} + \bar{\nabla}_j g_{i l} -
    \bar{\nabla}_l g_{i j})\\
    = & \frac{1}{2} g^{k l} (\partial_i g_{j l} - \bar{\Gamma}^s_{i l} g_{s j}
    - \bar{\Gamma}_{i j}^s g_{s l}) + \frac{1}{2} g^{k l} (\partial_j g_{i l}
    - \bar{\Gamma}^s_{j l} g_{s i} - \bar{\Gamma}^s_{i j} g_{s l})\\
    & \quad - \frac{1}{2} g^{k l} (\partial_l g_{i j} - \bar{\Gamma}^s_{j l}
    g_{s i} - \bar{\Gamma}_{i l}^s g_{s j})\\
    & = \Gamma_{i j}^k - \bar{\Gamma}_{i j}^k = \Lambda_{i j}^k = O (e^{-
    \tau r}) .
  \end{align}
  Expressing $R_{i j k}^{\phantom{i j k} l}$ in terms of $\bar{R}_{i j
  k}^{\phantom{i j k} l}$ and $\Lambda$, we have
  \begin{align}
    R_{i j k}^{\phantom{i j k} l} & = \partial_i \Gamma_{j k}^l - \partial_j
    \Gamma_{i k}^l + \Gamma_{j k}^m \Gamma_{i m}^l - \Gamma_{i k}^m \Gamma_{j
    m}^l\\
    & = \partial_i (\Lambda_{j k}^l + \bar{\Gamma}_{j k}^l) - \partial_j
    (\Lambda_{i k}^l + \bar{\Gamma}_{i k}^l)\\
    & \quad + (\Lambda_{j k}^m + \bar{\Gamma}_{j k}^m) (\Lambda_{i m}^l +
    \bar{\Gamma}_{i m}^l) - (\Lambda_{i k}^m + \bar{\Gamma}_{i k}^m)
    (\Lambda_{j m}^l + \bar{\Gamma}_{j m}^l)\\
    & = \partial_i \Lambda_{j k}^l - \Lambda_{s k}^l \bar{\Gamma}_{i j}^s -
    \Lambda_{s j}^l \bar{\Gamma}^s_{i k} + \Lambda_{j k}^s \bar{\Gamma}_{i
    s}^l + \bar{R}_{i j k}^{\phantom{i j k} l}\\
    & \quad - \partial_j \Lambda_{i k}^l + \Lambda_{i s}^l \bar{\Gamma}_{j
    k}^s + \Lambda_{s k}^l \bar{\Gamma}_{i j}^s - \Lambda_{i k}^s
    \bar{\Gamma}_{j s}^l + \Lambda_{j k}^m \Lambda_{i m}^l - \Lambda_{i k}^m
    \Lambda_{j m}^l\\
    & = \bar{R}_{i j k}^{\phantom{i j k} l} + \bar{\nabla}_i \Lambda_{j k}^l
    - \bar{\nabla}_j \Lambda_{i k}^l + \Lambda_{j k}^m \Lambda_{i m}^l -
    \Lambda_{i k}^m \Lambda_{m j}^l .
  \end{align}
  In short, the Riemann curvature tensor has the decay,
  \begin{align}
    R_{i j k}^{\phantom{i j k} l} & = \bar{R}_{i j k}^{\phantom{i j k} l} +
    \bar{\nabla}_i \Lambda_{j k}^l - \bar{\nabla}_j \Lambda_{i k}^l + O (e^{-
    2 \tau r}) .
  \end{align}
  We also readily find the decay of $\bar{\nabla}_i g^{j k}$,
  \begin{align}
    \bar{\nabla}_i g^{j k} & = \partial_i g^{j k} + \bar{\Gamma}_{i l}^k g^{j
    l} + \bar{\Gamma}_{i l}^j g^{k l}\\
    & = (\partial_i g^{j k} + \Gamma_{i l}^k g^{j l} + \Gamma_{i l}^j g^{k
    l}) - (\Lambda_{i l}^k g^{j l} + \Lambda_{i l}^j g^{k l})\\
    & = - (\Lambda_{i l}^k g^{j l} + \Lambda_{i l}^j g^{k l}) = O (e^{- \tau
    r}) .
  \end{align}
  Since $g = b + e$, the inverse metric $g^{- 1}$ by the formula of invertible
  matrices from elementary linear algebra, $g^{i j} = b^{i j} + O (e^{- \tau
  r})$, hence the decay of $\bar{\nabla}_i \Lambda_{j k}^l$ is
  \begin{align}
    2 \bar{\nabla}_i \Lambda_{j k}^l & = \bar{\nabla}_i (g^{l s}
    (\bar{\nabla}_j e_{k s} + \bar{\nabla}_k e_{j s} - \bar{\nabla}_s e_{j
    k})^{})\\
    & = b^{s l} (\bar{\nabla}_j e_{k s} + \bar{\nabla}_k e_{j s} -
    \bar{\nabla}_s e_{j k}) + O (e^{- 2 \tau r}) .
  \end{align}
  We use the shorthand $E := \ensuremath{\operatorname{tr}}_b e$, the decay of
  the Ricci tensor is
  \begin{align}
    - 2 R_{i k} & = 2 R_{i j k}^{\phantom{i j k} j}\\
    & = - 2 \bar{R}_{i k} + 2 \bar{\nabla}_i \Lambda_{j k}^j - 2
    \bar{\nabla}_j \Lambda_{i k}^j + O (e^{- 2 \tau r})\\
    & = - 2 \bar{R}_{i k} + b^{j l} \bar{\nabla}_i (\bar{\nabla}_j e_{l k} +
    \bar{\nabla}_k e_{l j} - \bar{\nabla}_l e_{j k})\\
    & \quad - b^{j l} \bar{\nabla}_j (\bar{\nabla}_i e_{k l} + \bar{\nabla}_k
    e_{i l} - \bar{\nabla}_l e_{i k}) + O (e^{- 2 \tau r})\\
    & = - 2 (1 - n) b_{i k}\\
    & \quad + (\bar{\nabla}_i \bar{\nabla}_k E_{} - \bar{\nabla}^l
    \bar{\nabla}_i e_{k l} - \bar{\nabla}^l \bar{\nabla}_k e_{i l} +
    \bar{\nabla}^l \bar{\nabla}_l e_{i k}) + O (e^{- 2 \tau r}) .
  \end{align}

  There are two metrics involved, so we should raise and lower the indices
  explicitly. Also, the scalar curvature $R$ decays as
  \begin{align}
    - 2 R & = - 2 g^{i k} R_{i k}\\
    & = - 2 g^{i k} b_{i k} (1 - n)\\
    & \quad + b^{i k} (\bar{\nabla}_i \bar{\nabla}_k E - \bar{\nabla}^l
    \bar{\nabla}_i e_{k l} - \bar{\nabla}^l \bar{\nabla}_k e_{i l} +
    \bar{\nabla}^l \bar{\nabla}_l e_{i k}) + O (e^{- 2 \tau r})\\
    & = - 2 g^{i k} b_{i k} (1 - n) + 2 \bar{\Delta} E - 2 \bar{\nabla}^k
    \bar{\nabla}^l e_{k l} + O (e^{- 2 \tau r}) .
  \end{align}
  By a simple Taylor expansion argument, one has the elementary fact from
  linear algebra: Let $A, B$ be two symmetric $n \times n$ matrices, $A$ is
  positive definite and all entries of $B$ are sufficiently small such that $C
  := (A + B)^{- 1}$ exists, then
  \begin{equation}
    \sum_{i, j = 1}^n C_{i j} A_{i j} = n - \sum_{i, j = 1}^n A_{i j} B_{i j}
    + O (|B|^2) .
  \end{equation}
  So
  \begin{equation}
    R = (n - E) (1 - n) - (\bar{\Delta} E - \bar{\nabla}^k \bar{\nabla}^l e_{k
    l}) + O (e^{- 2 \tau r}) .
  \end{equation}
  
  Finally, we have the decay of the modified Einstein tensor $\tilde{G}$,
  \begin{align}
    - 2 \tilde{G}_{i k} & = - 2 R_{i k} + R g_{i k} + (n - 1) (n - 2) g_{i
    k}\\
    & = - 2 (1 - n) b_{i k} + (\bar{\nabla}_i \bar{\nabla}_k E -
    \bar{\nabla}^l \bar{\nabla}_i e_{k l} - \bar{\nabla}^l \bar{\nabla}_k e_{i
    l} + \bar{\nabla}^l \bar{\nabla}_l e_{i k})\\
    & \quad + (n - E) (1 - n) g_{i k} - g_{i k} (\bar{\Delta} E -
    \bar{\nabla}^j \bar{\nabla}^l e_{j l})\\
    & \quad + (n - 1) (n - 2) g_{i k} + O (e^{- 2 \tau r})\\
    & = 2 (1 - n) e_{i k} + (\bar{\nabla}_i \bar{\nabla}_k E - \bar{\nabla}^l
    \bar{\nabla}_i e_{k l} - \bar{\nabla}^l \bar{\nabla}_k e_{i l} +
    \bar{\nabla}^l \bar{\nabla}_l e_{i k})\\
    & \quad - (1 - n) b_{i k} E - b_{i k} (\bar{\Delta} E - \bar{\nabla}^j
    \bar{\nabla}^l e_{j l}) + O (e^{- 2 \tau r}) \label{decay einstein}
  \end{align}
   concluding the proof.
\end{proof}
Now we turn to the proof of Theorem \ref{thm:via ricci}.
\begin{proof}[Proof of Theorem \ref{thm:via ricci}]
  For brevity, we suppress the index $q$ in $D_q$ and similar objects.
  
  {\bfseries{Step 1. Term involving the modified Einstein tensor
  $\tilde{G}$.}} Set
  \begin{equation}
    I := \int_{\Sigma} \bar{\eta}^k \bar{\nabla}^i V [\bar{\nabla}_i
    \bar{\nabla}_k E - \bar{\nabla}^l \bar{\nabla}_i e_{k l} - \bar{\nabla}^l
    \bar{\nabla}_k e_{i l} + \bar{\nabla}^l \bar{\nabla}_l e_{i k}],
  \end{equation}
  and
  \begin{equation}
    J = \int_{\Pi} \bar{\eta}^k \bar{\nabla}^i V [\bar{\nabla}_i
    \bar{\nabla}_k E - \bar{\nabla}^l \bar{\nabla}_i e_{k l} - \bar{\nabla}^l
    \bar{\nabla}_k e_{i l} + \bar{\nabla}^l \bar{\nabla}_l e_{i k}] .
  \end{equation}
  Note that the integrands of $I$ and $J$ comes from $\tilde{G}$.
  \begin{align}
    I = & \int_{\Sigma} \bar{\eta}^k \bar{\nabla}^i V [\bar{\nabla}_i
    \bar{\nabla}_k E - \bar{\nabla}^l \bar{\nabla}_i e_{k l} - \bar{\nabla}^l
    \bar{\nabla}_k e_{i l} + \bar{\nabla}^l \bar{\nabla}_l e_{i k}]\\
    = & \int_{\partial D} - \int_{\Pi}\\
    = & \int_D \bar{\nabla}^k \bar{\nabla}^i V [\bar{\nabla}_i \bar{\nabla}_k
    E - \bar{\nabla}^l \bar{\nabla}_i e_{k l} - \bar{\nabla}^l \bar{\nabla}_k
    e_{i l} + \bar{\nabla}^l \bar{\nabla}_l e_{i k}]\\
    & \quad + \int_D \bar{\nabla}^i V \bar{\nabla}^k [\bar{\nabla}_i
    \bar{\nabla}_k E - \bar{\nabla}^l \bar{\nabla}_i e_{k l} - \bar{\nabla}^l
    \bar{\nabla}_k e_{i l} + \bar{\nabla}^l \bar{\nabla}_l e_{i k}] - J\\
    = & \int_D V b^{i k} [\bar{\nabla}_i \bar{\nabla}_k E - \bar{\nabla}^l
    \bar{\nabla}_i e_{k l} - \bar{\nabla}^l \bar{\nabla}_k e_{i l} +
    \bar{\nabla}^l \bar{\nabla}_l e_{i k}] - J\\
    & \quad + \int_D \bar{\nabla}^i V [\bar{\nabla}^k \bar{\nabla}_i
    \bar{\nabla}_k E - \bar{\nabla}^k \bar{\nabla}^l \bar{\nabla}_i e_{k l}]\\
    & \quad + \int_D \bar{\nabla}^i V [- \bar{\nabla}^k \bar{\nabla}^l
    \bar{\nabla}_k e_{i l} + \bar{\nabla}^k \bar{\nabla}^l \bar{\nabla}_l e_{i
    k}]\\
    = & \int_D 2 V [\bar{\nabla}^k \bar{\nabla}_k E - \bar{\nabla}^k
    \bar{\nabla}^l e_{k l}] + I_1 + I_2 - J,
  \end{align}
  where we denote
  \begin{equation}
    I_1 := \int_D \bar{\nabla}^i V [\bar{\nabla}^k \bar{\nabla}_i
    \bar{\nabla}_k E - \bar{\nabla}^k \bar{\nabla}^l \bar{\nabla}_i e_{k l}]
    \label{I1}
  \end{equation}
  and
  \begin{equation}
    I_2 := \int_D \bar{\nabla}^i V [- \bar{\nabla}^k \bar{\nabla}^l
    \bar{\nabla}_k e_{i l} + \bar{\nabla}^k \bar{\nabla}^l \bar{\nabla}_l e_{i
    k}] . \label{I2}
  \end{equation}
  One of the main steps is the calculation of the term $I_1$. First,
  exchanging the order of indices leads to
  \begin{equation}
    \bar{\nabla}^k \bar{\nabla}_i \bar{\nabla}_k E = \bar{\nabla}_i
    \bar{\nabla}^k \bar{\nabla}_k E - \bar{R}^{k \phantom{i j} l}_{\phantom{i}
    i k} \bar{\nabla}_l E = \bar{\nabla}_i \bar{\nabla}^k \bar{\nabla}_k E +
    (1 - n) \bar{\nabla}_i E,
  \end{equation}
  and similarly,
  \begin{align}
    \bar{\nabla}^k \bar{\nabla}^l \bar{\nabla}_i e_{k l} & = \bar{\nabla}^k
    (\bar{\nabla}_i \bar{\nabla}^l e_{k l} - \bar{R}^{l \phantom{i j}
    j}_{\phantom{i} i k} e_{j l} - \bar{R}^{l \phantom{i j} j}_{\phantom{i} i
    l} e_{j k})\\
    & = (\bar{\nabla}_i \bar{\nabla}^k \bar{\nabla}^l e_{k l} - \bar{R}^{k
    \phantom{i j} j}_{\phantom{i} i k} \bar{\nabla}^l e_{j l} - \bar{R}^{k
    \phantom{i j} j}_{\phantom{i} i l} \bar{\nabla}^l e_{j k} - \bar{R}^{k
    \phantom{i} l}_{\phantom{i} i \phantom{i} \phantom{i} j} \bar{\nabla}^j
    e_{k l})\\
    & \quad - \bar{R}^{l \phantom{i j} j}_{\phantom{i} i k} \bar{\nabla}^k
    e_{j l} - \bar{R}^{l \phantom{i j} j}_{\phantom{i} i l} \bar{\nabla}^k
    e_{j k}\\
    & = \bar{\nabla}_i \bar{\nabla}^k \bar{\nabla}^l e_{k l} - (2 n - 1)
    \bar{\nabla}^l e_{i l} + \bar{\nabla}_i E.
  \end{align}
  Integration by parts gives
  \begin{align}
    I_1 & = \int_D \bar{\nabla}^i V [\bar{\nabla}^k \bar{\nabla}_i
    \bar{\nabla}_k E - \bar{\nabla}^k \bar{\nabla}^l \bar{\nabla}_i e_{k l}]\\
    = & \int_D \bar{\nabla}^i V \bar{\nabla}_i (\bar{\nabla}^k \bar{\nabla}_k
    E - \bar{\nabla}^k \bar{\nabla}^l e_{k l})\\
    & \quad + \int_D [(2 n - 1) \bar{\nabla}^i V \bar{\nabla}^k e_{i k} - n
    \bar{\nabla}^i V \bar{\nabla}_i E]\\
    = & \int_{\partial D} \bar{\eta}_i \overline{\nabla^{}}^i V
    (\bar{\nabla}^k \bar{\nabla}_k E - \bar{\nabla}^k \bar{\nabla}^l e_{k l})
    - \int_D \bar{\nabla}_i \bar{\nabla}^i V (\bar{\nabla}^k \bar{\nabla}_k E
    - \bar{\nabla}^k \bar{\nabla}^l e_{k l})\\
    & \quad + \int_D [(2 n - 1) \bar{\nabla}^i V \bar{\nabla}^k e_{i k} - n
    \bar{\nabla}^i V \bar{\nabla}_i E] .
  \end{align}
  For the integrand of the term $I_2$,
  \begin{align}
    & - \bar{\nabla}^k \bar{\nabla}^l \bar{\nabla}_k e_{i l} + \bar{\nabla}^l
    \bar{\nabla}^k \bar{\nabla}_k e_{i l}\\
    = & \bar{R}^{k l \phantom{i} j}_{\phantom{i j} k} \bar{\nabla}_j e_{i l} +
    \bar{R}^{k l \phantom{i} j}_{\phantom{i j} l} \bar{\nabla}_k e_{i j} +
    \bar{R}^{k l \phantom{i} j}_{\phantom{i j} i} \bar{\nabla}_k e_{j l}\\
    = & \bar{R}^{k l \phantom{i} j}_{\phantom{i j} i} \bar{\nabla}_k e_{j l}\\
    = & \bar{\nabla}_i E - \bar{\nabla}^j e_{j i},
  \end{align}
  so
  \begin{equation}
    I_2 = \int_D \bar{\nabla}^i V (\bar{\nabla}_i E - \bar{\nabla}^j e_{j i})
    .
  \end{equation}
  Since $\bar{\nabla}^i \bar{\nabla}_j V = V \delta^i_j$, $\bar{\Delta} V = n
  V$, $\bar{\eta}$ is orthogonal to $\bar{\nabla} V$ along $\partial M$, and a
  further integration by parts gives
  \begin{align}
    I & = (2 - n) \int_D V [\bar{\nabla}^k \bar{\nabla}_k E - \bar{\nabla}^k
    \bar{\nabla}^l e_{k l}]\\
    & \quad + \int_{\Sigma} \bar{\eta}_i \overline{\nabla^{}}^i V
    (\bar{\nabla}^k \bar{\nabla}_k E - \bar{\nabla}^k \bar{\nabla}^l e_{k
    l})\\
    & \quad + \int_D [(2 n - 2) \bar{\nabla}^j V \bar{\nabla}^k e_{j k} - (n
    - 1) \bar{\nabla}^l V \bar{\nabla}_l E] - J\\
    & = (2 - n) \int_{\partial D} \bar{\eta}^k V (\bar{\nabla}_k E -
    \bar{\nabla}^l e_{k l}) - J\\
    & \quad + \int_D n \bar{\nabla}^j V \bar{\nabla}^k e_{j k} - \int_D
    \bar{\nabla}^l V \bar{\nabla}_l E\\
    & \quad + \int_{\Sigma} \bar{\eta}_i \overline{\nabla^{}}^i V
    (\bar{\nabla}^k \bar{\nabla}_k E - \bar{\nabla}^k \bar{\nabla}^l e_{k l})
    .
  \end{align}
  Combining the above with {\eqref{decay einstein}}, and again from
  integration by parts,
  \begin{align}
    & - 2 \int_{\Sigma} \tilde{G}_{i k} \eta^k \bar{\nabla}^i V + o (1)\\
    = & - 2 \int_{\Sigma} \tilde{G}_{i k} \bar{\eta}^k \bar{\nabla}^i V\\
    = & \int_{\Sigma} 2 (1 - n) e_{i k} \bar{\eta}^k \bar{\nabla}^i V + (2 -
    n) \int_{\partial D} \bar{\eta}^k V (\bar{\nabla}_k E - \bar{\nabla}^l
    e_{k l})\\
    & \quad + \int_D (n \bar{\nabla}^j V \bar{\nabla}^k e_{j k} -
    \bar{\nabla}^l V \bar{\nabla}_l E) - J.\\
    & \quad - \int_{\Sigma} (1 - n) E \bar{\eta}_k \bar{\nabla}^k V\\
    = & (2 - n) \int_{\Sigma} [e_{i k} \bar{\eta}^k \bar{\nabla}^i V - E
    \bar{\eta}_k \bar{\nabla}^k V + \bar{\eta}^k V (\bar{\nabla}_k E -
    \bar{\nabla}^l e_{k l})]\\
    & \quad + (2 - n) \int_{\Pi} \bar{\eta}^k V (\bar{\nabla}_k E -
    \bar{\nabla}^l e_{k l}) - J\\
    & \quad + n \int_{\Pi} e_{j k} \bar{\eta}^k \bar{\nabla}^j V - \int_{\Pi}
    E \bar{\eta}_l \bar{\nabla}^l V.
  \end{align}
  Writing down the terms that do not appear in the mass expression
  {\eqref{eq:mass}} and \ noting that $\bar{D} V = \bar{\nabla} V$ and
  $\bar{\eta} \bot \bar{\nabla} V$ along $\partial M$, we have
  \begin{align}
    K := & (2 - n) \int_{\Pi} \bar{\eta}^k V (\bar{\nabla}_k E -
    \bar{\nabla}^l e_{k l}) + n \int_{\Pi} e_{j k} \bar{\eta}^k \bar{\nabla}^j
    V - \int_{\Pi} E \bar{\eta}_l \bar{\nabla}^l V\\
    & \quad - \int_{\Pi} \bar{\eta}^k \bar{\nabla}^i V [\bar{\nabla}_i
    \bar{\nabla}_k E - \bar{\nabla}^l \bar{\nabla}_i e_{k l} - \bar{\nabla}^l
    \bar{\nabla}_k e_{i l} + \bar{\nabla}^l \bar{\nabla}_l e_{i k}]\\
    = & - (2 - n) \int_{\Pi} (\bar{\nabla}_n E - \bar{\nabla}^l e_{n l}) - n
    \int_{\Pi} e_{\alpha n} V^{- 1} \bar{\nabla}^{\alpha} V\\
    & \quad + \int_{\Pi} V^{- 1} \bar{\nabla}^{\alpha} V
    [\bar{\nabla}_{\alpha} \bar{\nabla}_n E - \bar{\nabla}^l
    \bar{\nabla}_{\alpha} e_{n l} - \bar{\nabla}^l \bar{\nabla}_n e_{\alpha l}
    + \bar{\nabla}^l \bar{\nabla}_l e_{\alpha n}]
  \end{align}
  
  {\bfseries{Step 2. Term involving second fundamental form $A_{\alpha
  \beta}$.}} Similar to {\eqref{eq:boundary derivative}}, viewing
  $\bar{\nabla}_{\alpha} e_{\beta n}$ as a 2-tensor on $\partial M$, we have
  \begin{equation}
    \bar{D}_{\gamma} \bar{\nabla}_{\alpha} e_{\beta n} = \bar{\nabla}_{\gamma}
    \bar{\nabla}_{\alpha} e_{\beta n} + V^{- 1} \bar{\nabla}_{\gamma} V
    \bar{\nabla}_{\alpha} e_{\beta n} .
  \end{equation}
  We calculate the terms of type $\bar{D}_{\gamma} (V^{- 1} \bar{D}^{\alpha} V
  \bar{\nabla}_{\mu} e_{\beta n})$ as follows
  \begin{align}
    & \bar{D}_{\gamma} (V^{- 1} \bar{D}_{\alpha} V \bar{\nabla}_{\mu}
    e_{\beta n})\\
    = & - V^{- 2} \bar{D}_{\gamma} V \bar{D}_{\alpha} V \bar{\nabla}_{\mu}
    e_{\beta n} + V^{- 1} \bar{D}_{\alpha} V \bar{D}_{\gamma}
    \bar{\nabla}_{\mu} e_{\beta n} + V^{- 1} \bar{D}_{\gamma} \bar{D}_{\alpha}
    V \bar{\nabla}_{\mu} e_{\beta n}\\
    = & - V^{- 2} \bar{D}_{\gamma} V \bar{D}_{\alpha} V \bar{\nabla}_{\mu}
    e_{\beta n} + V^{- 1} \bar{D}_{\alpha} V (\bar{\nabla}_{\gamma}
    \bar{\nabla}_{\mu} e_{\beta n} + V^{- 1} \bar{\nabla}_{\gamma} V
    \bar{\nabla}_{\mu} e_{\beta n})\\
    & \quad + V^{- 1} \bar{D}_{\gamma} \bar{D}_{\alpha} V \bar{\nabla}_{\mu}
    e_{\beta n}\\
    = & V^{- 1} \bar{D}_{\alpha} V \bar{\nabla}_{\gamma} \bar{\nabla}_{\mu}
    e_{\beta n} + V^{- 1} \bar{D}_{\gamma} \bar{D}_{\alpha} V
    \bar{\nabla}_{\mu} e_{\beta n}\\
    = & V^{} \bar{D}_{\alpha} V \bar{\nabla}_{\gamma} \bar{\nabla}_{\mu}
    e_{\beta n} + b_{\gamma \alpha} \bar{\nabla}_{\mu} e_{\beta n},
  \end{align}
  where in the last line we have used the relation $\bar{D}_{\alpha}
  \bar{D}_{\beta} V = V b_{\alpha \beta}$, and it is important to keep in mind
  the range of the Greek indices.
  Switching the roles of the indices $\mu$ and $n$, one obtains a similar
  formula. 
  
  By recalling from the expansion {\eqref{decay second fundamental
  form}} and {\eqref{decay mean curvature}},
  \begin{align}
    & - 2 \int_{\partial \Pi} (A - H h) (\bar{D} V, \theta) + o (1)\\
    = & - 2 \int_{\partial \Pi} (A - H h) (\bar{D} V, \bar{\theta})\\
    = & \int_{\partial \Pi} \bar{D}^{\gamma} V \bar{\theta}_{\gamma} V^{- 1}
    b^{\alpha \beta} (2 \bar{\nabla}_{\alpha} e_{\beta n} - \bar{\nabla}_n
    e_{\alpha \beta})\\
    & \quad - \bar{D}^{\alpha} V \bar{\theta}^{\beta} V^{- 1}
    (\bar{\nabla}_{\alpha} e_{\beta n} + \bar{\nabla}_{\beta} e_{\alpha n} -
    \bar{\nabla}_n e_{\alpha \beta})\\
    = & \int_{\Pi} \bar{D}_{\gamma} (V^{- 1} \bar{D}^{\gamma} V b^{\alpha
    \beta} (2 \bar{\nabla}_{\alpha} e_{\beta n} - \bar{\nabla}_n e_{\alpha
    \beta}))\\
    & \quad - \int_{\Pi} \bar{D}^{\beta} (V^{- 1} \bar{D}^{\alpha} V
    (\bar{\nabla}_{\alpha} e_{\beta n} + \bar{\nabla}_{\beta} e_{\alpha n} -
    \bar{\nabla}_n e_{\alpha \beta}))\\
    = & (n - 2) \int_{\Pi} b^{\alpha \beta} (2 \bar{\nabla}_{\alpha} e_{\beta
    n} - \bar{\nabla}_n e_{\alpha \beta})\\
    & \quad + \int_{\Pi} V^{- 1} \bar{D}^{\gamma} V b^{\alpha \beta} (2
    \bar{\nabla}_{\gamma} \bar{\nabla}_{\alpha} e_{\beta n} -
    \bar{\nabla}_{\gamma} \bar{\nabla}_n e_{\alpha \beta})\\
    & \quad - \int_{\Pi} V^{- 1} \bar{\nabla}^{\alpha} V
    (\bar{\nabla}^{\beta} \bar{\nabla}_{\alpha} e_{\beta n} +
    \bar{\nabla}^{\beta} \bar{\nabla}_{\beta} e_{\alpha n} -
    \bar{\nabla}^{\beta} \bar{\nabla}_n e_{\alpha \beta})\\
    = : & L.
  \end{align}
  
  To finish the proof, it remains to prove that
  \begin{equation}
    K + L = (n - 2) \int_{\Pi} \bar{h}^{\alpha \beta} \bar{D}_{\alpha}
    e_{\beta n} .
  \end{equation}
  We collect terms involving second covariant derivatives of $e = g - b$ from
  $K + L$, and also note that there is a common factor $V^{- 1}
  \bar{D}^{\alpha} V$ shared by these terms after renaming. Not writing down
  the common factor and using that the $b = \bar{h} + V^2 d s \otimes d s$
  along $\partial M$,
  \begin{align}
    & [\bar{\nabla}_{\alpha} \bar{\nabla}_n E - \bar{\nabla}^l
    \bar{\nabla}_{\alpha} e_{n l} - \bar{\nabla}^l \bar{\nabla}_n e_{\alpha l}
    + \bar{\nabla}^l \bar{\nabla}_l e_{\alpha n}]\\
    & \quad - (\bar{\nabla}^{\beta} \bar{\nabla}_{\alpha} e_{\beta n} +
    \bar{\nabla}^{\beta} \bar{\nabla}_{\beta} e_{\alpha n} -
    \bar{\nabla}^{\beta} \bar{\nabla}_n e_{\alpha \beta})\\
    & \quad + b^{\gamma \beta} (2 \bar{\nabla}_{\alpha} \bar{\nabla}_{\gamma}
    e_{\beta n} - \bar{\nabla}_{\alpha} \bar{\nabla}_n e_{\gamma \beta})\\
    = & 2 (\bar{\nabla}_a \bar{\nabla}^{\beta} e_{\beta n} -
    \bar{\nabla}^{\beta} \bar{\nabla}_{\alpha} e_{\beta n}) +
    (\bar{\nabla}_{\alpha} \bar{\nabla}^n e_{n n} - \bar{\nabla}^n
    \bar{\nabla}_{\alpha} e_{n n})\\
    = & - 2 (\bar{R}_{\alpha \phantom{i} \beta}^{\phantom{i j} \beta
    \phantom{i j} l} e_{n l} - \bar{R}_{\alpha \phantom{i} n}^{\phantom{i j}
    \beta \phantom{i j} l} e_{\beta l}) - 2 \bar{R}_{\alpha \phantom{i j}
    n}^{\phantom{i j} n \phantom{i j} l} e_{n l}\\
    = & - 2 \bar{R}_{\alpha \phantom{i j} k}^{\phantom{i j} k \phantom{i j} l}
    e_{n l} = - 2 (1 - n) e_{n \alpha} .
  \end{align}
  Adding the common factor $V^{- 1} \bar{\nabla}^{\alpha} V$ back, we have
  \begin{align}
    & K + L\\
    = & (n - 2) \int_{\Pi} b^{\alpha \beta} \bar{\nabla}_{\alpha} e_{\beta n}
    - n \int_{\Pi} e_{\alpha n} V^{- 1} \bar{\nabla}^{\alpha} V\\
    & \quad - 2 (1 - n) \int_{\Pi} e_{\alpha n} V^{- 1} \bar{\nabla}^{\alpha}
    V\\
    = & (n - 2) \int_{\Pi} \bar{h}^{\alpha \beta} \bar{D}_{\alpha} e_{\beta n}
  \end{align}
  by {\eqref{eq:boundary derivative}} thus finished the proof.
\end{proof}

\end{document}